\DeclareMathOperator{\supp}{supp}
\DeclareMathOperator{\dist}{dist}
\newtheorem{TAS}{Definition}[section]
\newtheorem{TAS hereditary subalgs}{Lemma}[section]
\newtheorem{finding q_0}[TAS]{Lemma}
\newtheorem{move orth projs into a subalg}[TAS]{Lemma}
\newtheorem{K(A_y x Mq) = K(A x Mq)}[TAS]{Lemma}
\newtheorem{proj in A_y x Mq}[TAS]{Lemma}
\newtheorem{sub TAS = TAS}[TAS]{Lemma}
\newtheorem{main theorem}[TAS]{Theorem}
 \newtheorem{classification}{Corollary}[section]
\begin{document}

\markboth{K. R. Strung \& W. Winter}
{Minimal dynamics and $\mathcal{Z}$-stable classification}

\title{MINIMAL DYNAMICS AND $\mathcal{Z}$-STABLE CLASSIFICATION}

\author{KAREN R. STRUNG and WILHELM WINTER}

\address{School of Mathematical Sciences, University of Nottingham\\
University Park, Nottingham NG7 2RD, United Kingdom}

\email{pmxks1@nottingham.ac.uk}

\email{wilhelm.winter@nottingham.ac.uk}
\date{\today}
\subjclass[2000]{46L85, 46L35}
\keywords{minimal homeomorphisms, classification, Z-stability}
\thanks{{\it Supported by:} EPSRC First Grant EP/G014019/1}
\maketitle

\begin{abstract}
Let $X$ be an infinite compact metric space, $\alpha : X \to X$ a minimal homeomorphism, $u$ the unitary implementing $\alpha$ in the transformation group $C^{*}$-algebra $C(X) \rtimes_{\alpha} \mathbb{Z}$, and $\mathcal{S}$ a class of separable nuclear $C^*$-algebras that contains all unital hereditary $C^*$-subalgebras of $C^*$-algebras in $\mathcal{S}$. Motivated by the success of tracial approximation by finite dimensional $C^*$-algebras as an abstract characterization of classifiable $C^{*}$-algebras and the idea that classification results for $C^*$-algebras tensored with UHF algebras can be used to derive classification results up to tensoring with the Jiang--Su algebra $\mathcal{Z}$, we prove that $(C(X) \rtimes_{\alpha} \mathbb{Z}) \otimes M_{q^{\infty}}$ is tracially approximately $\mathcal{S}$ if there exists a $y \in X$ such that the $C^*$-subalgebra $(C^*(C(X), uC_0(X \setminus \{y\}))) \otimes M_{q^{\infty}}$ is tracially approximately $\mathcal{S}$. If the class $\mathcal{S}$ consists of finite dimensional \mbox{$C^*$-algebras}, this can be used to deduce classification up to tensoring with $\mathcal{Z}$ for $C^*$-algebras associated to minimal dynamical systems where projections separate tracial states. This is done without making any assumptions on the real rank or stable rank of either $C(X) \rtimes_{\alpha} \mathbb{Z}$ or $C^*(C(X), uC_0(X \setminus \{y\}))$, nor on the dimension of $X$. The result is a key step in the classification of $C^{*}$-algebras associated to uniquely ergodic minimal dynamical systems by their ordered $K$-groups. It also sets the stage to provide further classification results for those \mbox{$C^*$-algebras} of minimal dynamical systems where projections do not necessarily separate traces.
 \end{abstract}

\section{Introduction}
The two subjects of $C^*$-algebras and dynamical systems have long been close allies. On the one hand, dynamical systems provide a rich source of elegant and fundamental examples for $C^*$-algebra theory, see \cite{Cuntz:On}, \cite{EllEva:irrrot} and \cite{Con:Thom}, to name but a few. On the other hand, the techniques of $C^*$-algebras have been used to make progress in distinguishing dynamical systems, most notably in the work of Giordano, Putnam, and Skau on minimal Cantor systems \cite{GioPutSkau:orbit}. The class of $C^*$-algebras associated to actions on infinite compact metric spaces has also played an interesting role in Elliott's classification program for $C^*$-algebras. The classification program was initiated when Elliott showed that the class of approximately finite dimensional (AF) $C^*$-algebras is classified by $K$-theory. Following this success, Elliott conjectured that simple separable nuclear $C^*$-algebras might be classified by a certain $K$-theoretic invariant, now called the Elliott invariant, in the sense that if $A$ and $B$ are two simple separable nuclear $C^*$-algebras with isomorphic invariants, then $A$ and $B$ are $*$-isomorphic as $C^*$-algebras. Moreover, the isomorphism may be chosen in such a way as to induce the isomorphism of Elliott invariants.

Evidence in support of the conjecture was successfully gathered by way of classification results for various classes of $C^*$-algebras, including theorems for those $C^*$-algebras associated to minimal dynamical systems. Many of these results require the presentation of the $C^*$-algebra as a direct limit, showing it is of a form known to be classifiable. For example, the irrational rotation algebras, which are the $C^*$-algebras associated to the dynamical system $(\mathbb{T}, \alpha)$ where $\alpha$ is an irrational rotation of the circle $\mathbb{T}$,  were shown by Elliott and Evans to be A$\mathbb{T}$ algebras with real rank zero~\cite{EllEva:irrrot}, and hence classifiable by their Elliott invariants. Similarly, the crossed products associated to minimal homeomorphisms of the Cantor set were shown, using results by Putnam, to be A$\mathbb{T}$ algebras with real rank zero and thus classifiable \cite{Ell:rrzeroI}.  

In the past decade, efforts have been made to avoid the use of specific direct limit presentations as a means of obtaining classification theorems and more abstract methods have been considered. To this end, Lin introduced the first notion of tracial approximation of $C^*$-algebras with his concept of tracially approximately finite dimensional  (TAF) $C^*$-algebras \cite{Lin:TAF1}. In the case of a simple unital $C^*$-algebra, these may be thought of as being approximated by finite dimensional $C^*$-algebras in trace. Similarly, one may consider $C^*$-algebras that are tracially approximately interval (TAI) algebras. These are $C^*$-algebras that can be approximated by interval algebras, that is, $C^*$-algebras of the form $\bigoplus_{i = 1}^{n} M_{m_i}(C(X_i))$ where $X_i$ is a single point or $X_i = [0,1]$, see \cite{Lin:ttr}. 

Tracial approximation was also considered by Elliott and Niu in \cite{EllNiu:tracial_approx}, where they studied approximation by splitting interval algebras (TASI). In the same paper they consider the concept of tracial approximation in a more general sense, that is, classes of TA$\mathcal{S}$ $C^*$-algebras where $\mathcal{S}$ is an arbitrary class of $C^*$-algebras (see Definition \ref{TAS} below). In particular, they look at which properties of the class $\mathcal{S}$ pass to the class TA$\mathcal{S}$.

The concept of tracial approximation has proven to be extremely useful in providing classification results. One of the most notable results has been by Lin and Phillips in \cite{LinPhi:MinHom}, where classification results for $C^*$-algebras with tracial rank zero were successfully applied to many $C^*$-algebras of minimal dynamical systems.  Let $X$ be an infinite compact metric space, $\alpha : X \to X$ a minimal homeomorphism and $u$ the unitary implementing $\alpha$ in $C(X) \rtimes_{\alpha} \mathbb{Z}$.  Lin and Phillips proved that under certain conditions, it is enough to verify that there exists a point $y \in X$ such that the $C^*$-subalgebra $C^*( C(X), uC_0(X \setminus \{y\}))$ has tracial rank zero. 

In this paper, we generalize the results in \cite{LinPhi:MinHom} by following the strategy for classification up to $\mathcal{Z}$-stability as outlined in \cite{Win:localizingEC}. Here, $\mathcal{Z}$ denotes the Jiang--Su algebra, introduced in \cite{JiaSu:Z}. There are many characterizations of this algebra, both abstract and concrete, which single it out as a universal object playing a role as fundamental as that of the Cuntz algebra $\mathcal{O}_{\infty}$, cf.\ \cite{RorWin:Z-revisited} and \cite{Win:ssa-Z-stable}. $\mathcal{Z}$-stability (i.e., the property of absorbing $\mathcal{Z}$ tensorially) is an important structural property for $C^{*}$-algebras; it has recently been shown to be closely related to other topological and algebraic regularity properties, such as finite topological dimension and strict comparison of positive elements. It is remarkable that all nuclear $C^{*}$-algebras classified so far by their Elliott invariants are in fact $\mathcal{Z}$-stable. 

In \cite{Win:localizingEC}, the second named author derived classification up to $\mathcal{Z}$-stability from classification results up to UHF stability. The latter are usually much easier to establish, since UHF stability ensures a wealth of projections. For any $q \in \mathbb{N} \setminus \{1\}$, let  $M_{q^{\infty}}$ denote the UHF algebra $\bigotimes_{n = 1}^{\infty} M_{q}$. Our main theorem says that if $\mathcal{S}$ is a class of separable unital  $C^*$-algebras such that the property of being a member of $\mathcal{S}$ passes to unital hereditary $C^*$-subalgebras, then $(C(X) \rtimes_{\alpha} \mathbb{Z}) \otimes M_{q^{\infty}}$ is TA$\mathcal{S}$ whenever there exists a $y \in X$ such that the $C^*$-subalgebra $C^*( C(X), uC_0(X \setminus \{y\})) \otimes M_{q^{\infty}}$ is TA$\mathcal{S}$. We tensor with the UHF algebra to alleviate some of the restrictions on $C(X) \rtimes_{\alpha} \mathbb{Z}$ and $C^*( C(X), uC_0(X \setminus \{y\}))$ that are found in \cite{LinPhi:MinHom} and to take advantage of results showing that classification results for $C^*$-algebras tensored with UHF algebras can be used to derive classification results up to tensoring with the Jiang--Su algebra, as shown in \cite{Win:localizingEC}, \cite{Lin:localizingECappendix} and \cite{LinNiu:KKlifting}.  When $\mathcal{S}$ is the set of finite dimensional \mbox{$C^*$-algebras}, our result shows that the $C^*$-algebras associated to minimal dynamical systems of infinite compact metric spaces whose projections separate tracial states are classified by their $K$-theory, up to tensoring with $\mathcal{Z}$.  Toms and the second named author show in \cite{TomsWinter:minhom} (see also \cite{TomsWinter:PNAS}) that in the case where the base space has finite topological dimension the crossed product is $\mathcal{Z}$-stable. Together with the results of the present paper this completes the classification of $C^{*}$-algebras associated to uniquely ergodic minimal finite dimensional dynamical systems by their ordered $K$-groups. We are optimistic that our result will also help provide further classification results for those $C^*$-algebras of minimal dynamical systems of infinite compact metric spaces where projections do not necessarily separate traces.

As an application, we obtain that the $C^{*}$-algebras associated to uniquely ergodic minimal homeomorphisms of odd spheres as considered by Connes in \cite{Con:Thom} are all isomorphic. In the smooth case, this was already shown in \cite{Winter:dr-Z-stable}, using  the inductive limit decomposition of \cite{LinPhi:mindifflimits}. The latter is technically very advanced; our method provides a somewhat easier path since the inductive limit structure of the subalgebras $C^*( C(X), uC_0(X \setminus \{y\}))$ can be established with much less effort.  In the not necessarily uniquely ergodic case, one expects the space of tracial states to be the classifying invariant; building on our present results, this will be pursued in a subsequent article.

The paper is organized as follows. In Section~\ref{Preliminaries} we give the definition for a \mbox{$C^*$-algebra} that is TA$\mathcal{S}$ and outline the strategy for arriving at our main theorem in Section~\ref{Strategy}. In Section~\ref{Main results} we state and prove our main technical results and in Section~\ref{Applications} we apply these to derive classification results. We also discuss some examples and outline the strategy for further results relating to the classification of transformation group $C^{*}$-algebras.

\section{Preliminaries} \label{Preliminaries}

We begin with a definition of what is meant by tracial approximation, cf.\ \cite{Lin:amenableC*-algebras} and \cite{EllNiu:tracial_approx}. 

\begin{TAS}\label{TAS}
Let $\mathcal{S}$ denote a class of separable unital  $C^*$-algebras. Let $A$ be a simple unital $C^*$-algebra. Then $A$ is {\em tracially approximately\/} $\mathcal{S}$ (or $\mathrm{TA}\mathcal{S}$) if the following holds. For every finite subset $\mathcal{F} \subset A$, every $\epsilon > 0$, and every nonzero positive element $c \in A$, there exists a projection $p \in A$ and a unital $C^*$-subalgebra $B \subset pAp$ with $1_{B}=p$ and $B \in \mathcal{S}$ such that:
\begin{enumerate}
\item[\textup{(i)}] $\| pa - ap \| < \epsilon$ for all $a \in \mathcal{F}$,
\item[\textup{(ii)}] $\dist(pap, B) < \epsilon$ for all $a \in \mathcal{F}$,
\item[\textup{(iii)}] $1_{A}-p$ is Murray--von Neumann equivalent to a projection in $\overline{cAc}$.
\end{enumerate}
\end{TAS}

For a compact metric space $X$ and a minimal homeomorphism $\alpha: X \to X$, put $A = C(X) \rtimes_{\alpha} \mathbb{Z}$. We denote $$A_{\{y\}} = C^*( C(X), uC_0(X \setminus \{y\})).$$
$A_{\{y\}}$ is a unital $C^{*}$-subalgebra of $A$, a generalization of those introduced by Putnam in \cite{Putnam:MinHomCantor}. This algebra carries much of the information contained in $A$ while at the same time is significantly more tractable. In particular, by Theorem 4.1(3) of \cite{Phi:CancelSRDirLims}, its $K_{0}$-group is isomorphic to that of $A$, and it can be written as an inductive limit of subhomogeneous algebras in a straightforward manner, see Section 3 of \cite{LinQPhil:KthoeryMinHoms}. There are natural bijections between the set of $\alpha$-invariant probability measures on $X$, the set of tracial states on $A$ and the set of tracial states on $A_{\{y\}}$ (\cite{LinQPhil:KthoeryMinHoms}, Theorem 1.2).

We recall the notion of strict comparison of positive elements for a $C^*$-algebra $A$.  For two positive elements $a, b \in A$, write $a \lesssim b$ if there exists $r_j \in A$ such that $\lim_j r_j b r_j^* = a$. For any $C^*$-algebra $A$, denote by $T(A)$ the tracial state space of $A$. For $\tau \in T(A)$, define
$$ d_{\tau}(a) = \lim_{n \to \infty} \tau(a^{1/n}) \quad (a \in A_+).$$
If $A$ is exact, then the set $\{d_{\tau} \mid \tau \in T(A)\}$ coincides with the set of normalized lower semicontinuous dimension functions of $A$ (\cite{Ror:uhfII} and \cite{Haa:quasi}). If $d_{\tau}(a) < d_{\tau}(b)$ for all $\tau \in T(A)$ implies $a \lesssim b$, then we say that $A$ has \emph{strict comparison (of positive elements)}.  It was recently shown by Toms (\cite{Toms:CompSmoothDyn}, Corollary 5.5) that if $X$ is a compact smooth connected manifold and $\alpha : X \to X$ a diffeomorphism, then $C(X) \rtimes_{\alpha} \mathbb{Z}$ has strict comparison. His result relies on the direct limit structure of $C(X) \rtimes_{\alpha} \mathbb{Z}$ given in \cite{LinPhi:mindifflimits}; such a structure is not known in the general case. However, because we have tensored with a UHF algebra, we are able to use results of R\o rdam to circumvent this issue. For a compact metric space $X$ and a minimal homeomorphism $\alpha: X \to X$, let $A = C(X) \rtimes_{\alpha} \mathbb{Z}$. Then, for any $q \in \mathbb{N} \setminus \{1\}$ and any $y \in X$,  the $C^*$-algebras $A \otimes M_{q^{\infty}}$ and $A_{\{y\}} \otimes M_{q^{\infty}}$ have strict comparison by Theorem 5.2 of \cite{Ror:uhfII}.

\section{Strategy} \label{Strategy}

Our aim is to show that when $A = C(X) \rtimes_{\alpha} \mathbb{Z}$ is the $C^*$-algebra arising from a minimal dynamical system of an infinite compact metric space such that the subalgebra $A_{\{y\}} \otimes M_{q^{\infty}} \subset A \otimes M_{q^{\infty}}$ is TA$\mathcal{S}$, then $A \otimes M_{q^{\infty}}$ is TA$\mathcal{S}$. One can generally only expect such a passage from $A_{\{y\}}$ to $A$ after tensoring with a UHF algebra. We also need to assume the class $\mathcal{S}$ of Definition~\ref{TAS} to be closed with respect to unital hereditary $C^{*}$-subalgebras. 

Our result (and method) is a generalization of the tracial rank zero case (without tensoring with a UHF algebra), which was obtained by Lin and Phillips in \cite{LinPhi:MinHom}. A key lemma for their proof is showing that the finite dimensional $C^*$-subalgebra in the definition of tracial rank zero can be replaced by a sufficiently large simple unital $C^*$-subalgebra of tracial rank zero. Since we are concerned with $A \otimes M_{q^{\infty}}$, we show that for any simple unital $C^*$-algebra $A$, after tensoring with the UHF algebra, $A \otimes M_{q^{\infty}}$ is TA$\mathcal{S}$ when the subalgebra $B \in \mathcal{S}$ in Definition~\ref{TAS} is replaced by a simple unital subalgebra of $A \otimes M_{q^{\infty}}$ that is TA$\mathcal{S}$. With the aim of using this result (Lemma~\ref{sub TAS = TAS} below), the key step is finding a suitable projection $p$ in the $C^*$-subalgebra $A_{\{y\}} \otimes M_{q^{\infty}}$. Under the assumption that $\mathcal{S}$ is closed under taking unital hereditary $C^*$-subalgebras, the unital hereditary $C^*$-subalgebra \mbox{$B := p(A_{\{y\}} \otimes M_{q^{\infty}}) p$} is also TA$\mathcal{S}$; this follows from Lemma 2.3 of \cite{EllNiu:tracial_approx}. The idea then is to find a projection such that $B$ will be nearly invariant under conjugation by $u \otimes 1_{M_{q^{\infty}}}$ and large enough for us to conclude that all of $A \otimes M_{q^{\infty}}$ is TA$\mathcal{S}$. This is achieved in Lemma~\ref{proj in A_y x Mq}. 

We impose no additional requirements on $A_{\{y\}} \otimes M_{q^{\infty}}$ in Lemma~\ref{sub TAS = TAS}. Notice in particular, that by virtue of having the simple stably finite $C(X) \rtimes_{\alpha} \mathbb{Z}$  tensored with a UHF algebra, we do not require that the $C^*$-subalgebra $A_{\{y\}}$ has stable rank one, as is necessary for Lemma 4.2 of \cite{LinPhi:MinHom}; this follows from Corollary 6.6 of \cite{Ror:uhf}. In addition, Lemma 4.2 of \cite{LinPhi:MinHom} requires the assumption that $A_{\{y\}}$ has real rank zero. This is used to pick out an initial small projection using Lemma 4.1 of~\cite{LinPhi:MinHom}. This projection is used to find a sequence of orthogonal projections, where conjugation by $u \otimes 1_{M_{q^{\infty}}}$ acts as a shift, which are then perturbed using Berg's technique \cite{Berg:shiftApprox}. When $\mathcal{S}$ is the set of finite dimensional $C^*$-algebras, then TA$\mathcal{S}$ is simply the class of $C^*$-algebras with tracial rank zero, in which case real rank zero is automatic by Theorem 3.4 of \cite{Lin:TAF1}. However if we are interested in the class of TAI algebras, for example, we can no longer assume this to be the case, as was shown in \cite{Lin:simpleNuclearTR1}.  Thus in an effort to allow the class $\mathcal{S}$ to be as general as possible, we remove the assumption that $A \otimes M_{q^{\infty}}$ has real rank zero.  
Our Lemma~\ref{finding q_0} allows us to find an initial small projection while avoiding the real rank zero restriction.  We then find a projection $p \in A_{\{y\}} \otimes M_{q^{\infty}}$ satisfying the same conditions, (i) -- (iii) of  Lemma \ref{sub TAS = TAS}. Here we do not get an initial sequence of projections all lying in $A_{\{y\}} \otimes M_{q^{\infty}}$.  However the projections lie close enough to this $C^*$-subalgebra that we are able to push them inside. After applying Berg's technique, we once again end up outside $A_{\{y\}} \otimes M_{q^{\infty}}$, but close enough to push the resulting loop inside, eventually ending up with the desired result. 

\section{Main Results} \label{Main results}

\begin{finding q_0} \label{finding q_0}
Let $X$ be an infinite compact metric space and $\alpha : X \to X$ a minimal homeomorphism. Let $y \in X$, and set $A_{\{y\}} = C^*(C(X), uC_0(X \setminus \{y\}))$, where $u$ is the unitary in $C(X) \rtimes_{\alpha} \mathbb{Z}$ implementing $\alpha$. Let $q \in \mathbb{N} \setminus \{1\}$. 

Then for any $\eta > 0$ and any open set $V \subset X$ containing $y$, there exists an open set $W \subset V$ with $y \in W$, functions $g_0 \in C_0(W), g_1 \in C_0(V)$, $0 \leq g_0, g_1 \leq 1$ and a projection $q_0 \in \overline{C_0(V) A_{\{y\}}C_0(V)} \otimes M_{q^{\infty}}$ such that
\[ g_0(y) = 1, \quad g_1 |_W = 1, \quad and \quad \| q_0 (g_1 \otimes 1) - g_1 \otimes 1 \| \leq \eta. \]
\end{finding q_0}

\begin{proof}
We claim that there is a non-zero projection in $\overline{C_0(V) A_{\{y\}} C_0(V)} \otimes M_{q^{\infty}}$.

The set $V$ is non-empty since $y \in V$. Thus $C_0(V)$ is non-zero and hence we can find a non-zero positive contraction in $(C_0(V) A_{\{y\}} C_0(V)) \otimes M_{q^{\infty}}$, call it $e$. Since $A_{\{y\}}$ is simple by Proposition 2.5 of \cite{LinPhi:MinHom}, so is $A_{\{y\}} \otimes M_{q^{\infty}}$. Thus every tracial state $\tau \in T(A_{\{y\}} \otimes M_{q^{\infty}})$ is faithful, and in particular we have $\tau(e) > 0$ for every tracial state $\tau$. Since $A_{\{y\}} \otimes M_{q^{\infty}}$ is unital, $T(A_{\{y\}} \otimes M_{q^{\infty}})$ is compact. Thus $\min_{\tau \in T(A_{\{y\}} \otimes M_{q^{\infty}})} \tau(e) > 0$. Furthemore, $d_{\tau} (e) > \tau(e)$ so the previous observations imply that $\min_{\tau \in T(A_{\{y\}} \otimes M_{q^{\infty}})} d_{\tau}(e) > 0$. 

Since $A_{\{y\}} \otimes M_{q^{\infty}}$ has projections that are arbitrarily small in trace, there is a projection $p \in A_{\{y\}} \otimes M_{q^{\infty}}$ satisfying $$\max_{\tau \in T(A_{\{y\}} \otimes M_{q^{\infty}})} \tau (p) < \min_{\tau \in T(A_{\{y\}} \otimes M_{q^{\infty}})} d_{\tau} (e).$$ 

By the above, for the projection $p$ and any $\tau \in T(A_{\{y\}} \otimes M_{q^{\infty}})$ we have $d_{\tau} (p) = \tau(p) < d_{\tau} (e)$, so by strict comparison there are $x_n \in A_{\{y\}} \otimes M_{q^{\infty}}$ with $x_n e x_n^* \to p$. Let $$a_n = e^{1/2}x_n^* x_n e^{1/2} \in (C_0(V) A_{\{y\}} C_0(V)) \otimes M_{q^{\infty}}.$$ Then $a_n$ is self-adjoint and $$\| a_n - a_n^2\| \to 0.$$
Disregarding any $a_n$ such that $\| a_n - a_n^2\| \geq 1/4$, we obtain a sequence of projections $b_n$ satisfying $\| b_n - a_n\| \leq 2 \|a_n - a_n^2\| \to 0$ (Lemma 2.5.5 of \cite{Lin:amenableC*-algebras}). Thus we obtain, for large enough $n$, a projection $b = b_n$ contained in $\overline{C_0(V) A_{\{y\}} C_0(V)} \otimes M_{q^{\infty}}$, proving the claim. Moreover, $b$ is Murray--von Neumann equivalent to $p$, so $\min_{\tau} \tau(b) = \min_{\tau} \tau(p)$.

Let $W$ be an open set contained in $V$ such that $y \in W$ and small enough so that for every function $f \in C_0(W)$ with $0 \leq f \leq 1$ we have $d_{\tau}(f \otimes 1_{M_{q^{\infty}}}) \leq \frac{1}{2} \min_{\tau} \tau(b)$ for every $\tau \in T(A_{\{y\}} \otimes M_{q^{\infty}})$. Choose $g_0, g_1 \in C_0(W)$ such that $0 \leq g_0, g_1 \leq 1$, $g_0(y) = 1$ and $g_1 g_0 = g_0$. Then $d_{\tau} (g_1 \otimes 1) < d_{\tau}(b)$ for every $\tau \in T(A_{\{y\}} \otimes M_{q^{\infty}})$, and so by the comparison of positive elements we have $(g_1 \otimes 1) \lesssim b$ in $A_{\{y\}} \otimes M_{q^{\infty}}$ and hence also in $\overline{C_0(V) A_{\{y\}} C_0(V)} \otimes M_{q^{\infty}}$.  Since $A_{\{y\}} \otimes M_{q^{\infty}}$ has stable rank one and $\overline{C_0(V) A_{\{y\}} C_0(V)} \otimes M_{q^{\infty}}$ is a full hereditary $C^*$-subalgebra of $A_{\{y\}} \otimes M_{q^{\infty}}$, it also has stable rank one by Theorem 3.6 of \cite{Rfl:sr} with Theorem 2.8 of \cite{Bro:stabher}.  By Proposition 2.4 of \cite{Ror:uhfII}, for $\eta/2 >0$ there is a unitary $v$ in $(\overline{C_0(V) A_{\{y\}} C_0(V)} \otimes M_{q^{\infty}})^+$  (the unitization of $\overline{C_0(V) A_{\{y\}} C_0(V)} \otimes M_{q^{\infty}}$) such that $(g_1 \otimes 1 - {\eta/2})_+ \leq v b v^*$ in  $(\overline{C_0(V) A_{\{y\}} C_0(V)} \otimes M_{q^{\infty}})^{+}$, and hence in $\overline{C_0(V) A_{\{y\}} C_0(V)} \otimes M_{q^{\infty}}$. Put $q_0 = vbv^*$. Then
\begin{eqnarray*}
\| q_0 (g_1 \otimes 1) - (g_1 \otimes 1)\| &< & \| q_0  (g_1 \otimes 1 - \eta/2)_+ - (g_1 \otimes 1) \| + \eta/2\\
 &=& \|(g_1 \otimes 1 - \eta/2)_+ - (g_1 \otimes 1) \| + \eta/2 \\
 &<& \eta. 
 \end{eqnarray*}
\end{proof}

We will use the previous lemma to choose an initial projection in $A_{\{y\}} \otimes M_{q^{\infty}}$. However, since this projection actually only approximates the properties we would like it to have, we require the following easy lemma that pushes orthogonal projections into a $C^*$-subalgebra. The proof is straightforward and hence omitted.

\begin{move orth projs into a subalg} \label{move orth projs into a subalg}
Given $\epsilon > 0$ and a positive integer $n$, there is a $\delta > 0$ with the following property.
Let $A$ be a $C^*$-algebra, $B$ a $C^*$-subalgebra of $A$. Suppose that $p_1, \dots, p_n$ are mutually orthogonal projections in $A$, the first $k$, $0 \leq k \leq n$, of which are contained in B, and that $a_{k+1}, \dots, a_n$ are self adjoint elements of $B$ such that
\[ \| p_i - a_i \| < \min(1/2, \delta), \quad i = k+1, \dots, n. \]
Then there are mutuallly orthogonal projections $q_1, \dots, q_n$ in $B$, where $q_i = p_i$ for $1 \leq i \leq k$, and for $k+1 \leq i \leq n$ we have
\[ \| q_i - p_i \| < \epsilon. \]
Moreover, if $A$ is unital then there are unitaries $u_i \in A$ such that $q_i = u_ip_iu^*_i .$
\end{move orth projs into a subalg}

{\begin{K(A_y x Mq) = K(A x Mq)} \label{K(A_y x Mq) = K(A x Mq)}
Let $X$ be an infinite compact metric space with a minimal homeomorphism $\alpha: X \to X$. Let $A = C^*(X) \rtimes_{\alpha} \mathbb{Z}$ and $A_{\{y\}} = C^*(C(X), uC_0(X \setminus \{y\}))$. Then $$K_0(A_{\{y\}} \otimes M_{q^{\infty}}) \cong K_0(A \otimes M_{q^{\infty}})$$
as ordered groups, with the isomorphism induced by the inclusion $\iota : A_{\{y\}} \to A$.
\end{K(A_y x Mq) = K(A x Mq)}

\begin{proof} Since the $K_1$-group of the UHF algebra $M_{q^{\infty}}$ is $\{ 0 \}$, we have $$K_*(B) \otimes K_*(M_{q^{\infty}}) = (K_0 (B) \otimes K_0(M_{q^{\infty}})) \oplus (K_1(B) \otimes K_0(M_{q^{\infty}}))$$
where $B$ denotes $A$ or  $A_{\{y\}}$.

Let $\iota : A_{\{y\}} \to A$ be the inclusion map. We have the associated inclusion map $\iota \otimes \mathrm{id}_{M_{q^{\infty}}} : A_{\{y\}} \otimes M_{q^{\infty}} \to A \otimes M_{q^{\infty}}$ whence we get the group homomorphisms $K_0(\iota) : K_0(A_{\{y\}}) \to K_0(A)$ and $K_0(\iota \otimes \mathrm{id}_{M_{q^{\infty}}}) : K_0(A_{\{y\}} \otimes M_{q^{\infty}}) \to K_0(A \otimes M_{q^{\infty}})$.  Since  $K_*(M_{q^{\infty}})$ is torsion free, by the K\"unneth Theorem for Tensor Products, we have homomorphisms 
$$\alpha_1 : K_*(A_{\{y\}}) \otimes K_*(M_{q^{\infty}}) \to K_*(A_{\{y\}} \otimes M_{q^{\infty}})$$
and  $$\alpha_2 :   K_*(A) \otimes K_*(M_{q^{\infty}}) \to K_*(A \otimes M_{q^{\infty}}),$$
which are of degree $0$, thus giving maps of the $K_0$ groups, which we will also call $\alpha_1$ and $\alpha_2$. We get the following diagram

\begin{displaymath}
\xymatrix{ 0 \ar[r] & K_0(A_{\{y\}}) \otimes K_0(M_{q^{\infty}}) \ar[d]_{K_0(\iota) \otimes \mathrm{id}_{K_0(M_{q^{\infty}})}} \ar[r]^{\quad \alpha_1} & K_0(A_{\{y\}} \otimes M_{q^{\infty}}) \ar[r] \ar[d]^{K_0(\iota \otimes \mathrm{id}_{M_{q^{\infty}}})}& 0 \\
0 \ar[r] & K_0(A) \otimes K_0(M_{q^{\infty}})  \ar[r]^{\quad \alpha_2} & K_0(A \otimes M_{q^{\infty}}) \ar[r] & 0 }
 \end{displaymath}\\

\noindent
which commutes by naturality. It follows from Theorem 4.1(3)  of  \cite{Phi:CancelSRDirLims} that the map $K_0(\iota) \otimes \mathrm{id}_{K_0(M_{q^{\infty}})}$ is an isomorphism. We conclude that $K_0(\iota \otimes \mathrm{id}_{M_{q^{\infty}}})$ is also an isomorphism. 

Clearly $K_0(\iota \otimes \mathrm{id}_{M_{q^{\infty}}})([1 \otimes 1_{M_{q^{\infty}}}]) = [1 \otimes 1_{M_{q^{\infty}}}]$, and thus preserves order units. It remains to show $K_0(\iota \otimes \mathrm{id}_{M_{q^{\infty}}})$ preserves the order structure and hence is an order isomorphism.

Let $\eta \in K_0(A \otimes M_{q^{\infty}})_+$. Then there are projections $p$ and $q$ in $M_{\infty}(A_{\{y\}} \otimes M_{q^{\infty}})$ such that $K_0(\iota \otimes \mathrm{id}_{M_{q^{\infty}}})([p] - [q]) = \eta$. We show that $[p] - [q] \in K_0(A_{\{y\}} \otimes M_{q^{\infty}})_+$. Let $\tau'$ be the unique tracial state on $M_{q^{\infty}}$. Then, by Theorem 1.2 (4) of \cite{LinQPhil:KthoeryMinHoms}, for $\sigma \in T(A_{\{y\}} \otimes M_{q^{\infty}})$, we have $\sigma = (\tau \circ \iota) \otimes \tau'$ for some $\tau \in T(A)$. By simplicity of $A \otimes M_{q^{\infty}}$, we have $K_0(\tau \otimes \tau')(\eta) > 0$. It follows that $(\tau \otimes \tau')(\iota \otimes \mathrm{id}_{M_{q^{\infty}}})(p - q) > 0$, hence $(\tau \circ \iota) \otimes (\tau' \circ \mathrm{id}_{M_{q^{\infty}}})(p) > (\tau \circ \iota) \otimes (\tau' \circ \mathrm{id}_{M_{q^{\infty}}})(q)$ and finally, $\sigma(p) > \sigma(q)$. Since $A_{\{y\}} \otimes M_{q^{\infty}}$ has strict comparison, we must have $q \lesssim p$, so $[p] - [q] > 0$, as desired.
\end{proof}

The following lemma generalizes Lemma 4.2 of \cite{LinPhi:MinHom}, due to H.~Lin and N.~C.~Phillips.

\begin{proj in A_y x Mq} \label{proj in A_y x Mq} 
Let $X$ be an infinite compact metric space, $\alpha : X \to X$ a minimal homeomorphism, $y \in X$ and $q \in \mathbb{N} \setminus \{1\}$. Let $A = C(X) \rtimes_{\alpha} \mathbb{Z}$ and $A_{\{y\}} = C^*(C(X), uC_0(X \setminus \{y\}))$, where $u$ is the unitary implementing $\alpha$ in $A$. Then, for any finite subset $\mathcal{F} \subset A \otimes M_{q^{\infty}}$ and every $\epsilon > 0$, there is a projection $p$ in $A_{\{y\}} \otimes M_{q^{\infty}}$ such that 
\begin{enumerate}
\item[\textup{(i)}] $\| pa - ap \| < \epsilon$ for all $a \in \mathcal{F}$,
\item[\textup{(ii)}] $\dist (pap, p(A_{\{y\}} \otimes M_{q^{\infty}})p) < \epsilon$ for all $a \in \mathcal{F}$,
\item[\textup{(iii)}] $\tau(1_{A \otimes M_{q^{\infty}}} - p) < \epsilon$ for all $\tau \in T(A \otimes M_{q^{\infty}})$.
\end{enumerate}
\end{proj in A_y x Mq}

\begin{proof} Let $\epsilon> 0$. We first show that there exists a projection satisfying properties (i) -- (iii) of the lemma when $\mathcal{F}$ is assumed to be of the form
\[ \mathcal{F} = ( \mathcal{G} \otimes \{ 1_{M_{q^{\infty}}} \} ) \cup \{ u \otimes 1_{M_q{^{\infty}}} \} \]
where $\mathcal{G}$ is a finite subset of $C(X)$.

Let $N_0 \in \mathbb{N}$ such that $ \pi / (2 N_0) < \epsilon/4$.

Let $\delta_0 > 0$ with $\delta_0 < \epsilon/4$ and sufficiently small so that for all $g \in \mathcal{G}$ we have $\| g(x_1) - g(x_2)\| < \epsilon/ 8$ as long as $d(x_1, x_2) < 4 \delta_0$.

Choose $\delta > 0$ with $\delta < \delta_0$ and such that $d(\alpha^{-n}(x_1), \alpha^{-n}(x_2)) < \delta_0$ whenever $d(x_1, x_2) < \delta$ and $0 \leq n \leq N_0$.

Since $\alpha$ is minimal, there is an $N > N_0 + 1$ such that 
\[ d(\alpha^N(y), y) < \delta.\]
Let $R \in \mathbb{N}$ be sufficiently large so that 
\[R > (N + N_0 + 1 )/ \min(1, \epsilon).\] 

Minimality of $\alpha$ also implies that there is an open neighbourhood $U$ of $y$ such that 
\[ \alpha^{-N_0}(U), \alpha^{-N_0 + 1}(U), \dots, U, \alpha(U), \dots, \alpha^R(U) \]
are all disjoint. Making $U$ smaller if necessary, we may assume that each $\alpha^n(U)$, $-N_0 \leq n \leq R$ has diameter less than $\delta$. To apply Berg's technique, we only need $U_n$ for $-N_0 \leq n \leq N$, however we require $R$ to be larger in order to satisfy property (iii) of the lemma.

Let $\lambda = \max \{ \| g \| \mid g \in \mathcal{G} \}$, and choose
\[ 0< \epsilon_0 < \min (1/2, \epsilon / (2(N +  3 N_0 + 1)), \epsilon/(32 N  (\lambda + \epsilon/4)), \]
\[
0 < \epsilon_{1}<\min (\epsilon_{0}/8, \delta_{\epsilon_{0}, N}/16)
\]
and 
\[ 0 < \eta < \min ( 2\epsilon,  \delta_{\epsilon_1, N_0 + N + 1} )\]
where $\delta_{\epsilon_1, N_0 + N + 1}$ is given by Lemma~\ref{move orth projs into a subalg} with respect to $\epsilon_0$ and  $N_0 + N +1$ in place of $\epsilon$ and $n$, respectively; similarly for $\delta_{\epsilon_{0},N}$. \\

Let $f_0 : X \to [0, 1]$ be continuous with $\supp(f_0) \subset U$, and $f_0 |_V = 1$ for some open set $V \subset U$ containing $y$.\\

By Lemma~\ref{finding q_0}, there is an open set $W \subset V$ containing $y$, functions $g_0 \in C_0(W)$, $g_1 \in C_0(V)$, $0 \leq g_0, g_1 \leq 1$ and a projection $q_0 \in \overline{C_0(V) A_{\{y\}} C_0(V)} \otimes M_{q^{\infty}}$ such that 
\[ g_0(y) = 1, g_1 |_W = 1\text{ and } \| q_0 (g_1 \otimes 1) - g_1 \otimes 1 \| < \eta/2.\]

Consequently, $(f_0 \otimes 1) q_0 = q_0 = q_0 (f_0 \otimes 1)$ and $\| q_0 (g_0 \otimes 1) - g_0 \otimes 1 \| < \eta /2$. \\

For $-N_0 \leq n \leq N$, set
\[ q_n = (u^n \otimes 1) q_0 (u^{-n} \otimes 1), \quad  \quad f_n = u^n f_0 u^{-n} = f_0 \circ \alpha^{-n} \quad \text{ and } \quad U_n= \alpha^n(U). \]
Then $\supp(f_n) \subset U_n$ and
\[ (f_n \otimes 1) q_n = ((u^n f_0 u^{-n}) \otimes 1)(u^n \otimes 1) q_0 (u^{-n} \otimes 1) = (u^n \otimes 1)  (f_0 \otimes 1) q_0 (u^{-n} \otimes 1) = q_n. \]
Similarly, $q_n (f_n \otimes 1) = q_n.$
Since the $f_n$ have disjoint support, it follows that the projections
\[ q_{-N_0}, \dots , q_{-1}, q_0, q_1, \dots, q_N \]
are mutually orthogonal.\\

We claim that $q_{-N_0}, \dots , q_{-1}, q_0 \in A_{\{y\}} \otimes M_{q^{\infty}}$ and that there are self-adjoint $c_1, \dots, c_N \in A_{\{y\}} \otimes M_{q^{\infty}}$ such that $\| q_n - c_n \| < \eta$ for $1 \leq n \leq N$.\\

Let $1 \leq n \leq N_0$ and consider $q_{-n}$. We have $(uf_{-n} \otimes 1) \in A_{\{y\}} \otimes M_{q^{\infty}}$ for $1 \leq n \leq N_0$ since $U_{-n} \cap U_0 = \emptyset$. Let $a_n = f_0^n u^n \otimes 1$.
Then 
\begin{eqnarray*}
a_n = f_0^n u^n \otimes 1 &=& (u u^{-1} f_0 u^2 u^{-2} f_0 u^3 u^{-3} \cdots u^n u^{-n} f_0 u^n) \otimes 1\\
&=& (uf_{-1} \otimes 1) (u f_{-2} \otimes 1) \cdots (uf_{-n} \otimes 1) \in A_{\{y\}} \otimes M_{q^{\infty}}.
\end{eqnarray*}

 From this it follows that 
 \begin{eqnarray*}
 q_{-n} &=& (u^{-n} \otimes 1) q_0 (u^n \otimes 1)\\
 &=& (u^{-n} \otimes 1) (f_0^n \otimes 1) q_0 (f_0^n \otimes 1)(u^n \otimes 1)\\
 &=& a_n^* q_0 a_n \in A_{\{y\}} \otimes M_{q^{\infty}}.
 \end{eqnarray*} 
 
Note that $q_0 ( g_0 \otimes 1) = q_0 (g_1 g_0 \otimes 1)$, since $g_1 |_W = 1$ and $g_0 \in C_0(W)$. Thus
\begin{eqnarray*}
\|  (g_1 g_0 \otimes 1)  - q_0 (g_0 \otimes 1) \| &=& \left\| (  g_1 \otimes 1-  q_0 (g_1 \otimes 1) ) \right (g_0 \otimes 1) \| \\
&<& \eta/2.
\end{eqnarray*}
Also, $g_1 f_0 = g_1$ since $f_0 |_V = 1$ and $g_1 \in C_0(V)$. Hence
\begin{eqnarray*}
\lefteqn{\| (q_0  - g_1 \otimes 1) (f_0 \otimes 1 - g_0 \otimes 1 ) - (q_0 - g_1 \otimes 1) \|} \\
&=& \| q_0 (f_0 \otimes 1) - q_0(g_0 \otimes 1) - (g_1 f_0) \otimes 1 + (g_1 g_0) \otimes 1 - q_0 + g_1 \otimes 1 \| \\
&=& \|  (g_1 g_0 \otimes 1) - q_0( g_0 \otimes 1)\| \\
&<& \eta/2.
\end{eqnarray*}
Since $f_0(y) = 1= g_0(y)$,  we have that $u(f_0 - g_0) \otimes 1 \in A_{\{y\}} \otimes M_{q^{\infty}}$. Set 
\[ c_1 = ( u(f_0 - g_0) \otimes 1) (q_0 - g_1 \otimes 1)(u(f_0 - g_0) \otimes 1)^* + (u g_1 u^* \otimes 1). \]
Then $c_1$ is a self-adjoint element in $A_{\{y\}} \otimes M_{q^{\infty}}$ and
\begin{eqnarray*}
\lefteqn{\| q_1  -  c_1 \|} \\
&=& \| (u \otimes 1) q_0 (u^* \otimes 1) - (u(f_0 - g_0) \otimes 1) (q_0 - g_1 \otimes 1) (u(f_0 - g_0) \otimes 1 )^* \\
&& - (u \otimes 1)(g_1 \otimes 1)(u^* \otimes 1) \| \\ 
&=& \left \| q_0 - ((f_0 - g_0) \otimes 1)(q_0 - g_1 \otimes 1)( (f_0 - g_0) \otimes 1 )  - g_1 \otimes 1  \right \| \\
&\leq& \left \| (q_0 - g_1 \otimes 1) - (q_0 - g_1 \otimes 1)( (f_0 - g_0) \otimes 1 ) \right \| \\ 
&& + \left \| (q_0 - g_1 \otimes 1)( (f_0 - g_0) \otimes 1 )   - ((f_0 - g_0) \otimes 1 ) (q_0 - g_1 \otimes 1) ( (f_0 - g_0 ) \otimes 1) \right \| \\
&\leq& \left \| (q_0 - g_1 \otimes 1) - (q_0 - g_1 \otimes 1) ( (f_0 - g_0) \otimes 1 ) \right \| \\
&&  +  \left \| (q_0 - g_1 \otimes 1) - ( (f_0 - g_0) \otimes 1 ) (q_0 - g_1 \otimes 1)  \right \| \left \|  ( (f_0 - g_0) \otimes 1 ) \right \| \\
& <&  \eta.
\end{eqnarray*}
For $2 \leq n \leq N$, define
\[ c_n  = ((u f_{n-1} \cdots u f_1) \otimes 1 ) c_1((u f_{n-1} \cdots u f_1) \otimes 1 )^* .\]
The $c_n$ are self-adjoint elements in $A_{\{y\}} \otimes M_{q^{\infty}}$ since $f_{n-1}, \dots, f_1$ all vanish at $y$. Furthermore, 
\begin{eqnarray*}
\|q_n - c_n \| &=& \| (u^n \otimes 1 ) q_0 ( u^{-n} \otimes 1) -  c_n \| \\
&=& \left \| ( (u^n f_0^{n-1}) \otimes 1) q_0 ((f_0^{n-1} u^{-n}) \otimes 1) - c_n \right \|  \\
&=& \left \| ( (u^n f_0^{n-1} u^{-1}) \otimes 1) q_1  ((u f_0^{n-1} u^{-n}) \otimes 1 ) -  c_n\right \|  \\
&=& \left\| ((u f_{n-1} \cdots u f_1) \otimes 1 ) q_1((u f_{n-1} \cdots u f_1) \otimes 1 )^* - c_n \right\| \\
&\leq& \| q_1 - c_1 \| \\
&<& \eta.
\end{eqnarray*}
This proves the claim. 

We now apply Lemma~\ref{move orth projs into a subalg} to obtain projections $p_1, \dots, p_N$ in $A_{\{y\}} \otimes M_{q^{\infty}}$ such that
\[ q_{-N_0}, \dots, q_{-1}, q_0, p_1, \dots, p_N \]
are mutually orthogonal, and, for $1 \leq n \leq N$, we have 
\[ \| p_n - q_n \| < \epsilon_1 \]
and unitaries $y_n$ such that 
\[ p_n = y_nq_ny_n^*. \] 

Since $p_n \sim q_n$ and $q_n \sim q_0$, we have $[p_N] = [q_0]$ in $K_0(A \otimes M_{q^{\infty}})$. Since $K_0(\iota \otimes \mathrm{id}_{M_{q^{\infty}}}) : K_0(A_{\{y\}} \otimes M_{q^{\infty}}) \to K_0(A \otimes M_{q^{\infty}})$ is an isomorphism by Lemma~\ref{K(A_y x Mq) = K(A x Mq)}, we also have $[p_N] = [q_0]$ in $K_0(A_{\{y\}} \otimes M_{q^{\infty}})$. Moreover, simplicity of $A_{\{y\}}$ (\cite{LinPhi:MinHom}, Proposition 2.5) implies $A_{\{y\}} \otimes M_{q^{\infty}}$ has stable rank one by Corollary 6.6 of \cite{Ror:uhf}. Thus projections in matrix algebras over $A_{\{y\}} \otimes M_{q^{\infty}}$ satisfy cancellation, and there is a partial isometry $w \in A_{\{y\}} \otimes M_{q^{\infty}}$ such that $w^*w = q_0$ and $w w^* = p_N$.

For $t \in \mathbb{R}$, set
\[ v(t) = \cos(\pi t / 2)(q_0 + p_N) + \sin(\pi t/2)(w  - w^*). \]
Then $v(t)$ is a unitary in the corner $(q_0 + p_N)( A_{\{y\}} \otimes M_{q^{\infty}})(q_0 + p_N)$. The matrix of $v(t)$ with respect to the obvious decomposition is 
\[ \begin{pmatrix}
 \cos(\pi t / 2) & - \sin(\pi t/ 2) \\
  \sin(\pi t/ 2) & \cos(\pi t /2) \\
\end{pmatrix}. \]
For $0 \leq k \leq N_0$, define
\[ w_k = (u^{-k} \otimes 1) v(k/N_0) (u^k \otimes 1). \]
Also, let 
\[ w'_k = (a_k + b_k)^* v(k/N_0)(a_k + b_k) \]
where
\begin{eqnarray*}
a_k &=&  (f_0^k u^k) \otimes 1 = (uf_{-1} \cdots uf_{-k}) \otimes 1 \text{ (as above)}\\
b_k &=& (f_N^k u^k) \otimes 1 = (uf_{N-1} \dots uf_{N-k}) \otimes 1.
\end{eqnarray*}
Both $a_k$ and $b_k$ are in $A_{\{y\}} \otimes M_{q^{\infty}}$, hence \[ w'_k \in A_{\{y\}} \otimes M_{q^{\infty}}.\] We show what $w_k$ is close to $w'_k$.
Define
\[ x_k = (u^{-k} \otimes 1)(q_0 + q_N)v(k/N_0)(q_0 + q_N) (u^k \otimes 1). \]
We have that 
\begin{eqnarray*}
\| w_k - x_k \| &=& \| (u^{-k} \otimes 1)(q_0 + p_N) v(k/N_0) (q_0 + p_N)(u^{k} \otimes 1)\\
&& - (u^{-k} \otimes 1)(q_0 + q_N) v(k/N_0) (q_0 + q_N)(u^{k} \otimes 1)\| \\
&=& \| q_0 v(k/N_0) p_N - q_0 v(k/N_0) q_N + p_N v(k/N_0) q_0 - q_N v(k/N_0) q_0 \\
&& + p_N v(k/N_0) p_N -  q_N v(k/N_0) q_N \| \\
&\leq& 4 \|p_N - q_N \|  \\
&<& 4 \epsilon_1.
\end{eqnarray*}
Also,
\begin{eqnarray*}
\lefteqn{\| w'_k - (a_k  + b_k)^*(q_0 +q_N) v(k/N_0)(q_0 + q_N)(a_k + b_k) \|} \\
 &\leq& \|a_k + b_k\|^2 \|(q_0 + p_N) v(k/N_0)(q_0 + p_N) - (q_0 +q_N) v(k/N_0)(q_0 + q_N) \| \\
& \leq& 4 \| p_N - q_N\| \\
&<&  4 \epsilon_1.
\end{eqnarray*}
But 
\begin{eqnarray*}
\lefteqn{(a_k + b_k)^*(q_0 +q_N) v(k/N_0)(q_0 + q_N)(a_k + b_k)}\\
& =&  (f_0^k u^k + f_N^k u^k)\otimes 1)^*(q_0 + q_N) v(k/N_0)(q_0 + q_N)(f_0^k u^k + f_N^k u^k)\otimes 1) \\
& =& (u^{-k} \otimes 1)(q_0 + q_N) v(k/N_0)(q_0 + q_N)(u^k \otimes 1)\\
& =& x_k.
\end{eqnarray*}
Thus 
\[ \|w_k - w'_k \| < 8\epsilon_1.\]

Comparing $w_k$ to $w_{k+1}$ conjugated by $ u \otimes 1$ we have
$$ \|(u \otimes 1) w_{k+1} (u^{-1} \otimes 1) - w_k \|  =  \|v((k+1)/N_0) -v(k/N_0) \| \leq \pi/(2 N_0) < \epsilon /4$$
for $0 \leq k < N_0$.
Define projections 
\[ e_0 = q_0, e_n = p_n \text{ for } 1 \leq n < N-N_0\]
and 
\[ e_n =  w_{N-n} q_{n-N} w_{N-n}^* \text{ for } N- N_0 \leq n \leq N.\] 

Also define 
\[ d_n = q_n \text{ for } 0 \leq n < N-N_0 \] 
and 
\[ d_n = x_{N-n} q_{n-N} x_{N-n}^* \text{ for } N- N_0 \leq n \leq N. \]

Note that this gives $d_N =x_0 q_0 x_0^* = (q_0 + q_N) v(0) q_0 v(0)^*(q_0 + q_N) = q_0 =  d_0$ and and $e_N = v(0) q_0 v(0)^* = q_0 = e_0$. We also have that the $x_k^* x_l = 0$ when $k \neq l$. This follows from the fact that, if $k \neq l$, then $q_{-k}, q_{N-k}, q_{-l}$ and $q_{N-l}$, $0 \leq k \neq l \leq N_0$, are mutually orthogonal and 
\begin{eqnarray*}
\lefteqn{(q_0 + q_N)(u^k \otimes 1)(u^{-l} \otimes 1)(q_0 + q_N)}\\
&=& (u^k \otimes 1)(q_{-k} + q_{N-k})(q_{-l} + q_{N-l})(u^{-l} \otimes 1)\\
&=& 0.
\end{eqnarray*}
Also, if $0 < m < N-N_0$ and $N- N_0 \leq n \leq N$ then 
\[ q_m (u^{-(N-n)} \otimes 1) (q_0 + q_N) = q_m (q_{-(N-n)} + q_n) (u^{-(N-n)} \otimes 1) = 0,\] and similarly 
\[  (q_0 + q_N) (u^{N-n} \otimes 1) q_m = 0. \]
From this it follows that $d_m d_n = 0$ for $0 \leq m \neq n \leq N$. 

For $1 \leq n \leq N - N_0 - 1$ we have 
\[ \| e_n - d_n \| = \| p_n - q_n \| < \epsilon_1 \]
and
\[ \| e_0 - d_0 \| = 0. \]

If $N - N_0 \leq n \leq N$, then 
\begin{eqnarray*}
\| e_n - d_n \|&=& \| w_{N-n} q_{-(N-n)} w_{N-n}^* - x_{N-n} q_{-(N-n)} x_{N-n}^* \| \\
&=&  \| w_{N-n} q_{-(N-n)} w_{N-n}^* - w_{N-n} q_{-(N-n)} x_{N-n}^* + w_{N-n} q_{-(N-n)} x_{N-n}^* \\
&& - x_{N-n} q_{-(N-n)} x_{N-n}^*\| \\
& \leq& \| w_{N-n}^* - x_{N-n}^* \| + \|w_{N-n} -  x_{N-n}\| \\
& <& 4 \epsilon_1  + 4 \epsilon_1 \\
&=& 8 \epsilon_1 .
\end{eqnarray*}

We now show that conjugating the $d_n$ by $u \otimes 1$ acts approximately  as a cyclic shift.  For $1 \leq n \leq N-N_0 - 1$ we have $(u \otimes 1) d_{n-1} (u \otimes 1)^* = d_n$ since $d_n = q_n$.\\

If $n = N- N_0$, then 
\begin{eqnarray*}
d_{N-N_0} &=& x_{N_0} q_{-N_0} x^*_{N_0} \\
&=& (u^{-N_0} \otimes 1)(q_0 + q_N) v(1) (q_0 + q_N)q_0(q_0 + q_N)v(-1)(q_0 + q_N) (u^{N_0} \otimes 1) \\
&=& (u^{-N_0} \otimes 1)(q_0 + q_N) p_N (q_0 + q_N) (u^{N_0} \otimes 1) \\
&=& (u^{-N_0} \otimes 1)q_N p_N q_N(u^{N_0} \otimes 1).
\end{eqnarray*}
Thus
\begin{eqnarray*}
\lefteqn{\| (u \otimes 1)d_{N- N_0 - 1} ( u^* \otimes 1)  - d_{N-N_0} \|} \\
&=& \|q_{N- N_0} -  (u^{-N_0} \otimes 1)q_N p_N q_N(u^{N_0} \otimes 1) \| \\
&=&  \|  (u^{-N_0} \otimes 1)q_N (u^{N_0} \otimes 1) -  (u^{-N_0} \otimes 1)q_N p_N q_N(u^{N_0} \otimes 1) \| \\
&\leq& \|q_N - p_N \| \\
&<& \epsilon_1  .
\end{eqnarray*}

When $N- N_0 < n \leq N$, first consider what happens to the $e_n$ using the estimation made on the $w_k$ above. We have
\begin{eqnarray*}
\lefteqn{\| (u \otimes 1)  e_{n-1} (u^* \otimes 1) - e_n \|} \\ 
&=& \| (u \otimes 1) w_{N- (n-1)} (u^* \otimes 1) q_{n-N} (u \otimes 1) w_{N- (n-1)}^* (u^* \otimes 1) \\
&&  - w_{N-n} q_{n-N} w_{N-n}^* \|  \\
&\leq& \| (u \otimes 1) w_{N- (n-1)} (u^* \otimes 1) q_{n-N} (u \otimes 1) w_{N- (n-1)}^* (u^* \otimes 1)  - \\
&& (u \otimes 1) w_{N- (n-1)} (u^* \otimes 1) q_{n-N}w_{N-n}^* \| \\
&& + \| (u \otimes 1) w_{N- (n-1)} (u^* \otimes 1) q_{n-N}w_{N-n}^* - w_{N-n} q_{n-N} w_{N-n}^* \| \\
&\leq& \| (u \otimes 1) w_{N- (n-1)}^* (u^* \otimes 1) - w_{N-n}^* \| \\
&& + \| (u \otimes 1) w_{N- (n-1)} (u^* \otimes 1) - w_{N-n} \| \\
&<& \epsilon/ 2.
\end{eqnarray*}
From this we have
\begin{eqnarray*}
\| (u \otimes 1) d_{n-1} (u^* \otimes 1) - d_n \| &<& \| e_{n-1} - d_{n-1} \| + \| e_n - d_n \| + \epsilon /2 \\
&<& 16 \epsilon_1 + \epsilon/2.
\end{eqnarray*}

Now we use the fact that the $w_k$ are almost in $A_{\{y\}} \otimes M_{q^{\infty}}$ to find  projections in $A_{\{y\}} \otimes M_{q^{\infty}}$ that lie close to the $e_n$ and hence also close to the $d_n$. When $0 \leq n \leq N- N_0 - 1$, we have $e_n = p_n \in A_{\{y\}} \otimes M_{q^{\infty}}$. Also, since $e_N = q_0$,  we only need to find projections when $N - N_0 \leq n \leq N- 1$. In this case, we have
\begin{eqnarray*}
 \lefteqn{\| e_n - w'_{N-n} q_{-(N-n)} (w'_{N-n})^{*} \|} \\
 &=& \| w_{N-n} q_{-(N-n)} w_{N-n}^* - w'_{N-n} q_{-(N-n)} (w'_{N-n})^* \| \\
 &<& 16 \epsilon_1. 
 \end{eqnarray*}
Since $w'_{N-n} q_{-(N-n)} (w'_{N-n})^* \in A_{\{y\}} \otimes M_{q^{\infty}}$, by Lemma~\ref{move orth projs into a subalg} we find orthogonal projections $r_n \in A_{\{y\}} \otimes M_{q^{\infty}}$ with 
\[ \| r_n - e_n \| <  \epsilon_0 \text{ and } r_n = z_n e_n z_n^*\]
for unitaries $z_n \in A \otimes M_{q^{\infty}}$. This also implies that 
\[ \| r_n - d_n \| <  \epsilon_0 + 8 \epsilon_1 < 2 \epsilon_0. \] 

For $1 \leq n \leq N - N_0 - 1$ put $r_n = e_n = p_n$ and put $r_N = e_N = q_0$. Then set 
$$r = \sum_{n =1}^N r_n \qquad \text{ and } \qquad p = 1 - r.$$
We verify that the projection $p \in A_{\{y\}} \otimes M_{q^{\infty}}$ satisfies properties (i) -- (iii) of the lemma.\\

Let $d = \sum_{n = 1}^N d_n$. Note that
\[ d - (u \otimes 1) d (u \otimes 1)^* =  \sum_{n = N - N_0}^N ( (u \otimes 1) d_{n - 1} (u \otimes 1)^*  - d_n ).\] 
For $N-N_0 \leq m \neq n \leq N$, we have
\begin{eqnarray*}
\lefteqn{( (u \otimes 1) d_{n-1} (u \otimes 1)^* - d_n)((u \otimes 1) d_{m-1} (u \otimes 1)^* - d_m)} \\
&=&  (u \otimes 1) d_{n-1}d_{m-1} (u \otimes 1)^* - (u \otimes 1) d_{n-1} (u \otimes 1)^* d_m \\
&& - d_n(u \otimes 1) d_{m-1} (u \otimes 1)^* + d_n d_m \\
&=& -(u\otimes 1) x_{N-(n-1)}q_{n-1-N}x_{N-(n-1)}^* (u \otimes 1)^* x_{N-m}q_{m-N}x_{N-m}^* \\
&& - x_{N-n}q_{n-N}x_{N-n}^*(u \otimes 1) x_{N-(m-1)}q_{m-1-N} x_{N-(m-1)}^*(u \otimes 1)^*\\
&=& -(u \otimes 1) x_{N-(n-1)}q_{n-1-N}(u \otimes 1)^* x_{N-n}^* x_{N-m} q_{m-N} x_{N-m}^*\\
&& - x_{N-n}q_{n-N}x_{N-n}^*x_{N-m}(u \otimes 1)q_{m-1-N}x_{N-(m-1)}^*(u \otimes 1)^* \\
&=& 0.
\end{eqnarray*}
Thus the terms in the sum are mutually orthogonal with norm at most $16 \epsilon_1 + \epsilon/2$, hence 
\[ \| d - (u \otimes 1) d (u \otimes 1)^* \| < 16 \epsilon_1 + \epsilon/2 .\]
Now
\begin{eqnarray*}
\lefteqn{\| p - (u \otimes 1) p (u \otimes 1)^* \|} \\
&=& \| ((u \otimes 1) r (u^* \otimes 1) - r) - ((u \otimes 1) d (u^* \otimes 1) - d ) + ((u \otimes 1) d (u^* \otimes 1) - d ) \|\\
& \leq& 2 \|r -d \| +  16 \epsilon_1 + \epsilon/2 \\
&< & \sum_{n=1}^{N-N_0-1} 2  \|p_n - q_n\| + \sum_{m=N-N_0}^{N-1} 2 \|r_m - d_m\| + 16 \epsilon_1+  \epsilon/2 \\
&<& 2(N - N_0 -1) \epsilon_1 + 4 N_0 \epsilon_0 + 16 \epsilon_1 + \epsilon/2 \\
&<& (N-N_0-1) \epsilon_0 + 4 N_0 \epsilon_0 + 2 \epsilon_0 + \epsilon/2\\
&<& \epsilon.
\end{eqnarray*}

Since $g_1(y) = 1$ it follows that $ u(1 - g_1) \otimes 1 \in A_{\{y\}} \otimes M_{q^{\infty}}$. Thus we also have that $p (u\otimes 1)((1 - g_1)\otimes 1 )(1- q_0) p \in A_{\{y\}} \otimes M_{q^{\infty}}$. Note that $p \leq 1- q_0$. Using this and the fact that $\| g_1 \otimes 1 - (g_1 \otimes 1) q_0 \| < \eta/2 < \epsilon$, it follows that
\begin{eqnarray*}
\lefteqn{\| p(u \otimes 1)p - p (u\otimes 1)((1 - g_1)\otimes 1 )(1- q_0) p \|} \\
&=& \| p(u \otimes 1)p - p(u \otimes 1)p + p(u \otimes 1)(g_1 \otimes 1)(1 - q_0) p \|\\
&\leq& \| p(u \otimes 1)(g_1 \otimes 1) p - p(u \otimes 1)(g_1 \otimes 1) q_0 p \| \\
&<& \epsilon.
\end{eqnarray*}
This proves (i) and (ii) for the element $u \otimes 1 \in \mathcal{F}$.\\

Now consider $g \otimes 1 \in \mathcal{F}$, where $g \in C(X)$. \\

Since $d(\alpha^N(y), y) < \delta$, we have $d(\alpha^n(y), \alpha^{n-N}(y)) < \delta_0$ for $N - N_0 \leq n \leq N$. It follows that $U_{n-N} \cup U_n$ has diameter less than $2 \delta + \delta_0 \leq 3 \delta_0$. The function $g \in \mathcal{G}$ varies by at most $\epsilon/8$ on sets of diameter less than $4\delta_0$, and since the sets 
\[ U_1, U_2, \dots, U_{N- N_0 - 1}, U_{N-N_0} \cup U_{-N_0}, U_{N-N_0+1} \cup U_{-N_0 + 1}, \dots, U_N \cup U_0 \]
are open and pairwise disjoint, there is $\tilde{g} \in C(X)$ which is constant on each of these sets and satisfies $\| g - \tilde{g}\| < \epsilon/4$. Let the values of $\tilde{g}$ on these sets be $\lambda_{1}$ on $U_1$ through to $\lambda_{N}$ on $U_N \cup U_0$. \\

For $0 \leq n \leq N- N_0 -1$ we have
\begin{eqnarray*}
\| (f_n \otimes 1) r_n  - r_n \| &=& \| (f_n \otimes 1) r_n  - (f_n \otimes 1) q_n + q_n - r_n \| \\
&\leq& 2 \| q_n - p_n \| \\
&<& 2 \epsilon_1.
\end{eqnarray*}
Thus
\begin{eqnarray*}
\| (\tilde{g} \otimes 1) r_n - \lambda_n \cdot r_n \| & \leq& \| (\tilde{g} \otimes 1) r_n - (\tilde{g} \otimes 1)(f_n \otimes 1) r_n  \|  \\
&& +  \|(\tilde{g} \otimes 1)(f_n \otimes 1) r_n - \lambda_n \cdot r_n  \| \\
&<& 4 \|\tilde{g} \| \epsilon_1.
\end{eqnarray*}

For $N - N_0 \leq n \leq N$, we have that $(f_{n-N} + f_n)x_{N-n} =  x_{N-n}$, since we may write $x_{N-n} = (q_{n-N} + q_n) (u^{n - N} \otimes 1) v((N-n)/N_0 ) (u^{N-n} \otimes 1)(q_{n-N} + q_n)$. Similarly, $x_{N-n}^* (f_{n-N} + f_n) = x_{N-n}^*$. Thus $(f_{n-N} + f_n)d_n = d_n = d_n (f_{n-N} + f_n)$. It follows that
\begin{eqnarray*}
\| (f_{n-N} + f_n) r_n - r_n \| &=&  \|(f_{n-N} + f_n) r_n - (f_{n-N} + f_n) d_n + d_n - r_n \| \\
&<& 4 \epsilon_0.
\end{eqnarray*}
Thus, similar to the above, $\| (\tilde{g} \otimes 1) r_n - \lambda_n \cdot r_n \| < 8 \|\tilde{g} \| \epsilon_0$.\\

Hence
\begin{eqnarray*}
\| (g \otimes 1)p - p(g \otimes 1) \| &<&  \| (\tilde{g} \otimes 1)p- p(\tilde{g }\otimes 1) \| + \epsilon/2\\
&\leq&  \sum_{n=1}^N \|  (\tilde{g} \otimes 1)r_n - \lambda_n \cdot r_n + \lambda_n \cdot r_n - r_n (\tilde{g} \otimes 1) \| +  \epsilon/2 \\
&\leq& 2N (8 \|\tilde{g} \| \epsilon_0) +  \epsilon/2 \\
&<& \epsilon.
\end{eqnarray*}

This shows property (i) of the lemma for $g \otimes 1$, $g \in \mathcal{G}$. The second condition is immediate since $g \otimes 1$ is an element of $A_{\{y\}} \otimes M_{q^{\infty}}$.\\

It remains to verify the third condition.\\

Since the sets $\alpha^{-N_0}(U), \alpha^{-N_0 + 1}(U), \dots, U, \alpha(U), \dots, \alpha^R(U)$ are all disjoint and $R > (N + N_0 + 1) / \min(1, \epsilon)$, it follows that 
$$\sum_{n = -N_0}^R u^{n} f_0 u^{-n} = f_0^{-N_0} + \dots + f_0 + \dots + f_0 ^R \leq 1$$
and hence $\tau_1(f_0) \leq \tau_1(1)/(R+N_0 + 1) < \epsilon/(N + N_0 +1)$ for every $\tau_1 \in T(A)$. Since any $\tau \in T(A \otimes M_{q^{\infty}})$ is of the form $\tau = \tau_1 \otimes \tau_2$ for $\tau_1 \in T(A)$ and $\tau_2$ the unique tracial state on $M_{q^{\infty}}$, we have $\tau(q_0) \leq \tau (f_0 \otimes 1) = \tau_1(f_0) < \epsilon/(N + N_0 +1)$. For $1 \leq n \leq N-N_0 -1$ each $r_n$ is just $q_0$ conjugated by a unitary so $\tau(r_n) = \tau(q_0)$. For $N-N_0 \leq n \leq N$, $r_n = z_n e_n z_n^* = z_n w_{N-n}q_{-(N-n)}w^*_{N-n}z_n^*$. Thus
\begin{eqnarray*}
\tau(r_n) &=& \tau(z_n w_{N-n}q_{-(N-n)}w^*_{N-n}z_n^*) \\
&=& \tau(w_{N-n}q_{-(N-n)}w^*_{N-n})\\
&=& \tau (v ( (N-n)/{N_0})q_0v ((N-n)/N_0)^*) \\
&=& \tau((q_0 +  p_N)q_0 ) \\
&=& \tau(q_0).
\end{eqnarray*}
Thus
\[ \tau(1 - p ) = \sum_{n=1}^N \tau(r_n) = \sum_{n=1}^{N} \tau(q_0)  < N \epsilon/(N + N_0 +1) < \epsilon. \]

This proves the case where $\mathcal{F}$ is of the form $( \mathcal{G} \otimes \{ 1_{M_{q^{\infty}}} \} ) \cup \{ u \otimes 1_{M_{q^{\infty}}} \}$.\\

For the general case, let $\tilde{\mathcal{F}} \subset A \otimes M_{q^{\infty}}$ be a finite subset. Using the identification 
\[ A \otimes M_{q^{\infty}} \cong A \otimes M_{q^r} \otimes M_{q^{\infty}} \cong A \otimes M_{q^{\infty}} \otimes M_{q^r}, \]
for $r \in \mathbb{N}$, we may assume that the finite set is of the form
\[ (\{1_A \}\otimes \{1_{M_{q^{\infty}}} \}  \otimes \mathcal{B} )\cup ( \mathcal{\tilde{G}} \otimes \{1_{M_{q^{\infty}}} \} \otimes  \{1_{M_{q^r}} \} ) \cup (\{u\} \otimes \{1_{M_{q^{\infty}}} \}\otimes \{1_{M_{q^r}} \}) \]
where $r \in \mathbb{N}$,  $\mathcal{B}$ is a finite subset of $M_{q^r}$ and $\mathcal{G}$ is a finite subset of $C(X)$.\\

We may further assume that $1_X = 1_A \in \mathcal{G}$ and also that $1_{M_{q^r}} \in \mathcal{B}$. Then $\mathcal{F} = (\mathcal{G}  \otimes \{ 1_{M_{q^{\infty}}}\}) \cup \{u \otimes 1_{M_{q^{\infty}}} \}$ and $\mathcal{\tilde{F}} = \mathcal{F} \otimes \mathcal{B}$. \\

Let $\epsilon > 0$. By the above, there exists a projection $p \in A_{\{y\}} \otimes M_{q^{\infty}}$ satisfying properties (i) -- (iii) of the lemma for the finite set $\mathcal{F} = \mathcal{G} \otimes \{ 1_{M_{q^{\infty}}} \} \cup \{u \otimes 1_{M_{q^{\infty}}} \}$, with $\epsilon / \max (\{ \|b\| \mid b \in \mathcal{B} \}, 1)$ in place of $\epsilon$. \\
 
 Define $\tilde{p} := p \otimes 1_{M_{q^r}} \in A_{\{y \}} \otimes M_{q^{\infty}} \otimes M_{q^r}$. We now show that $\tilde{p}$ satisfies properties (i) -- (iii) of the lemma for $\mathcal{\tilde{F}}$ and $\epsilon$.
 
 Let $\tilde{a} \in \mathcal{\tilde{F}}$. Then $\tilde{a} = a \otimes b$ for some $a \in \mathcal{F}$ and some $b \in \mathcal{B}$. We have
 \begin{eqnarray*}
 \| \tilde{p} \tilde{a} - \tilde{a} \tilde{p} \| &=& \| (p \otimes 1)( a \otimes b) -  (a \otimes b)(p \otimes 1)\| \\
 &=& \|(pa) \otimes b - (ap) \otimes b \| \\
 &=& \| (pa - ap) \otimes b \| \\
 &=&  \|pa -ap \| \|b \| \\
 &<& \epsilon.
 \end{eqnarray*}
 
By the special case above, for every $a \in \mathcal{F}$, there is some $x \in p(A_{\{y\}} \otimes M_{q^{\infty}})p$ such that $\| pap - x\| < \epsilon / (\max_{b \in \mathcal{B}} \|b \|)$. Thus $x \otimes 1 \in \tilde{p} (A_{\{y\}} \otimes M_{q^{\infty}} \otimes M_{q^r}) \tilde{p}$. It is clear that $\tilde{p}(1 \otimes b) \tilde{p} \in \tilde{p} (A_{\{y\}} \otimes M_{q^{\infty}} \otimes M_{q^r}) \tilde{p}$ for any $b \in \mathcal{B}$, and so $x \otimes b \in  \tilde{p} (A_{\{y\}} \otimes M_{q^{\infty}}\otimes M_{q^r}) \tilde{p}$. It follows that
\begin{eqnarray*}
\| \tilde{p}(a \otimes b) \tilde{p} -  \tilde{p}(x \otimes b) \tilde{p} \| &=& \|\tilde{p}(a \otimes 1)(1 \otimes b) \tilde{p} - \tilde{p}(x \otimes 1)(1 \otimes b) \tilde{p} \| \\
&=& \|\tilde{p}(a \otimes 1) \tilde{p} (1 \otimes b) - \tilde{p}(x \otimes 1) \tilde{p}(1 \otimes b) \| \\
&=& \| (pap - pxp) \otimes 1 \| \|b\| \\
&<& \epsilon.
\end{eqnarray*}

This shows that (i) and (ii) hold. 

To prove (iii), simply observe that $\tau \in  T(A \otimes M_{q^{\infty}}\otimes M_{q^r})$ is of the form $\tau_1 \otimes \tau_2$ where $\tau_1 \in T(A \otimes M_{q^{\infty}})$ and $\tau_2 \in T(M_{q^r})$. Then 
\[ \tau(1 - \tilde{p}) = \tau(1 \otimes 1 - p \otimes 1) = \tau \left( (1-p) \otimes 1 \right) = \tau_1(1-p)\tau_2(1) < \epsilon.  \] 
\end{proof}

For the following lemma, we do not need any assumptions on the properties of the class $\mathcal{S}$ of separable unital $C^*$-algebras. However, for Theorem~\ref{main theorem}, we cut down a TA$\mathcal{S}$ $C^*$-subalgebra $B \subset A \otimes M_{q^{\infty}}$ by a projection; there we must make the additional requirement that the property of being a member of $\mathcal{S}$ passes to unital hereditary subalgebras. 

\begin{sub TAS = TAS} \label{sub TAS = TAS} Let $\mathcal{S}$ be a class of separable unital $C^*$-algebras.  Let $A$ be a simple unital $C^*$-algebra and $q \in \mathbb{N} \setminus \{1\}$. Suppose that for every finite subset $\mathcal{F} \subset A \otimes M_{q^{\infty}}$, every $\epsilon > 0$, and every nonzero positive $c \in A \otimes M_{q^{\infty}}$, there exists a projection $p \in A \otimes M_{q^{\infty}}$ and a simple unital $C^{*}$-subalgebra $B \subset p(A \otimes M_{q^{\infty}})p$ which  is TA$\mathcal{S}$, satisfies $1_{B}=p$ and
\begin{enumerate}
\item[\textup{(i)}] $\| pa - ap \| <  \epsilon$ for all $a \in \mathcal{F}$, 
\item[\textup{(ii)}] $\dist(pap, B) < \epsilon$ for all $a \in \mathcal{F}$,
\item[\textup{(iii)}] $1_{A} - p$ is Murray--von Neumann equivalent to a projection in $\overline{c (A \otimes M_{q^{\infty}}) c}$.
\end{enumerate}
Then $A \otimes M_{q^{\infty}}$ is TA$\mathcal{S}$.
\end{sub TAS = TAS}

\begin{proof} Although a TA$\mathcal{S}$ $C^*$-algebra may not have property (SP), the $C^*$-algebra $A \otimes M_{q^{\infty}}$ always will, since $A \otimes M_{q^{\infty}}$ has strict comparison (cf.\ \cite{Ror:uhfII}) and contains nonzero projections which are arbitrarily small in trace. After noting this, the proof is essentially the same as that of Lemma 4.4 of \cite{LinPhi:MinHom}, replacing the $C^*$-subalgebra of tracial rank zero with the TA$\mathcal{S}$ $C^*$-subalgebra $B$, and replacing the finite dimensional $C^*$-subalgebra with a $C^*$-subalgebra from the class $\mathcal{S}$.
\end{proof}

\begin{main theorem} \label{main theorem} Let $\mathcal{S}$ be a class of separable unital $C^*$-algebras such that the property of being a member of $\mathcal{S}$ passes to unital hereditary $C^*$-subalgebras. Let $X$ be an infinite compact metric space, $\alpha : X \to X$ a minimal homeomorphism, let $u$ be the unitary implementing $\alpha$ in $A := C(X) \rtimes_{\alpha} \mathbb{Z}$ and $q \in \mathbb{N} \setminus \{1\}$. Suppose there is a $y \in X$ such that $A_{\{y\}} \otimes M_{q^{\infty}}$ is TA$\mathcal{S}$. Then $A \otimes M_{q^{\infty}}$ is TA$\mathcal{S}$.
\end{main theorem}

\begin{proof} We show that $A \otimes M_{q^{\infty}}$ satisfies the conditions of Lemma~\ref{sub TAS = TAS}. 

Let $\epsilon > 0$, $\mathcal{F}$ a finite subset of $A \otimes M_{q^{\infty}}$ and a positive nonzero element $c$ in $A \otimes M_{q^{\infty}}$ be given. Use Lemma \ref{proj in A_y x Mq} to find a projection $p \in A_{\{y\}} \otimes M_{q^{\infty}}$ with respect to $\mathcal{F}$, $c$, and $\epsilon_0 = \min( \epsilon, \min_{\tau \in T(A \otimes M_{q^{\infty}})} \tau(c))$. Put $B = p(A_{\{y\}} \otimes M_{q^{\infty}})p$. It is a unital simple $C^*$-subalgebra of $p(A \otimes M_{q^{\infty}})p$ and is TA$\mathcal{S}$ by the assumptions made on $\mathcal{S}$ and Lemma 2.3 of \cite{EllNiu:tracial_approx}.  Conditions (i) and (ii) of Lemma~\ref{sub TAS = TAS} are satisfied by the choice of $p$. Since $\tau(1_{A} - p) < \min_{\sigma \in T(A \otimes M_{q^{\infty}})} \sigma(c) < \tau(c)$ for every tracial state $\tau \in T(A \otimes M_{q^{\infty}})$, it follows from Theorem 5.2(a) of \cite{Ror:uhfII} that $1_{A}-p$ is Murray--von Neumann equivalent to a projection in $\overline{c(A \otimes M_{q^{\infty}})c}$. Thus $A \otimes M_{q^{\infty}}$ is TA$\mathcal{S}$ by Lemma~\ref{sub TAS = TAS}. 
\end{proof}

\section{Classification. Outlook.} \label{Applications}

Recall that for a separable simple unital stably finite nuclear $C^*$-algebra $A$, the Elliott invariant of $A$ is given by 
$$((K_0(A), K_0(A)_+, [1_A]), K_1(A), T(A), r_A : T(A) \to S(K_0(A)))$$
consisting of the ordered $K$-groups, the Choquet simplex of tracial states $T(A)$ and \[r_A : T(A) \to S(K_0(A)),\] the canonical affine map to the state space of $(K_0(A), K_0(A)_+, [1_A])$ given by $r_A(\tau)([p]) = \tau(p)$ \cite{Ror:encyc}. Since we are interested in applying our results to Elliott's classification program, the most immediate application for Theorem~\ref{main theorem} is when $\mathcal{S}$  is the set of finite dimensional $C^*$-algebras, where we are able to apply Lin's classification theorem for $C^*$-algebras of tracial rank zero. A simple unital $C^*$-algebra with tracial rank zero always has real rank zero (\cite{Lin:TAF1}, Theorem 3.4). When $A$ has real rank zero, the map $r_A$ is bijective, and as such the invariant becomes the ordered $K$-theory \cite{Ror:encyc}.  Applying Theorem \ref{main theorem} to this special case, we have the following classification result up to tensoring with the Jiang--Su algebra $\mathcal{Z}$.

\begin{classification} \label{classification}Let $\mathcal{A}$ denote the class of $C^*$-algebras with the following properties.
\begin{enumerate}
\item[\textup{(i)}] $A \in \mathcal{A}$ is of the form $C(X) \rtimes_{\alpha} \mathbb{Z}$ for some infinite compact metric space $X$ and minimal homeomorphism $\alpha$. 
\item[\textup{(ii)}] The projections in $A$ separate $T(A)$.
\end{enumerate}

Let $A, B \in \mathcal{A}$ and suppose there is a graded order isomorphism $ \phi : K_*(A \otimes \mathcal{Z}) \to K_*(B \otimes \mathcal{Z})$.  Then there is a $*$-isomorphism $\Phi: A \otimes \mathcal{Z} \to B \otimes \mathcal{Z}$ inducing $\phi$.
\end{classification}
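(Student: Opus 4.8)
The plan is to use Theorem~\ref{main theorem} to force tracial rank zero after tensoring with a UHF algebra, to classify at that level by Lin's theorem for tracial-rank-zero $C^{*}$-algebras, and then to amalgamate over the primes by the localization machinery of \cite{Win:localizingEC}, \cite{Lin:localizingECappendix} and \cite{LinNiu:KKlifting}.

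First I would fix $A=C(X)\rtimes_{\alpha}\mathbb{Z}\in\mathcal{A}$ and $q\in\mathbb{N}\setminus\{1\}$, and take $\mathcal{S}$ to be the class of finite dimensional $C^{*}$-algebras; this class is closed under unital hereditary subalgebras (corners of finite dimensional algebras are finite dimensional), and for it TA$\mathcal{S}$ is exactly the class of $C^{*}$-algebras of tracial rank zero. By Theorem~\ref{main theorem} it then suffices to produce a $y\in X$ for which $A_{\{y\}}\otimes M_{q^{\infty}}$ has tracial rank zero, and this is the only place where hypothesis (ii) is used. Indeed, by Lemma~\ref{K(A_y x Mq) = K(A x Mq)} (together with Theorem 4.1(3) of \cite{Phi:CancelSRDirLims}) the inclusion $A_{\{y\}}\hookrightarrow A$ induces an order isomorphism $K_{0}(A_{\{y\}})\cong K_{0}(A)$, and by Theorem 1.2 of \cite{LinQPhil:KthoeryMinHoms} restriction of traces gives an affine homeomorphism $T(A)\to T(A_{\{y\}})$ compatible with the pairing against $K_{0}$; hence the projections of $A_{\{y\}}$ separate $T(A_{\{y\}})$. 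Since $A_{\{y\}}$ is a simple, unital, separable, nuclear inductive limit of recursive subhomogeneous algebras of stable rank one (\cite{Phi:CancelSRDirLims}, \cite{LinQPhil:KthoeryMinHoms}), the results of Lin and Phillips on the structure of these subalgebras (\cite{LinPhi:MinHom}) then give that $A_{\{y\}}$, hence also $A_{\{y\}}\otimes M_{q^{\infty}}$, has tracial rank zero. By Theorem~\ref{main theorem}, $A\otimes M_{q^{\infty}}$ — and, by the same reasoning applied to $B$, also $B\otimes M_{q^{\infty}}$ — has tracial rank zero for every $q\in\mathbb{N}\setminus\{1\}$.

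Next I would record that $A\otimes M_{q^{\infty}}$ is simple, unital, separable, nuclear (as $\mathbb{Z}$ is amenable) and lies in the bootstrap class, so it satisfies the UCT; having tracial rank zero it has real rank zero (\cite{Lin:TAF1}, Theorem 3.4), so its Elliott invariant reduces to the ordered group $(K_{0}(\cdot),K_{0}(\cdot)_{+},[1])$ together with $K_{1}(\cdot)$. As $K_{*}(\mathcal{Z})=K_{*}(\mathbb{C})$ and $K_{*}(M_{q^{\infty}})=(\mathbb{Z}[1/q],0)$, the given $\phi\colon K_{*}(A\otimes\mathcal{Z})\to K_{*}(B\otimes\mathcal{Z})$ is, under these identifications, a unit-preserving graded order isomorphism $K_{*}(A)\to K_{*}(B)$; I would choose, via the UCT, a lift $\kappa\in KK(A,B)$ of $\phi$. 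By \cite{LinNiu:KKlifting} (whose hypotheses are met via Lin's classification theorem for the tracial-rank-zero algebras at hand), for every $q$ there is a $*$-isomorphism $\Phi_{q}\colon A\otimes M_{q^{\infty}}\to B\otimes M_{q^{\infty}}$ of $KK$-class $\kappa\otimes\mathrm{id}_{M_{q^{\infty}}}$, and these are mutually compatible. Finally, feeding $\Phi_{p},\Phi_{q}$ for a pair of relatively prime $p,q$ into the localization theorem of \cite{Win:localizingEC} (with the refinement of \cite{Lin:localizingECappendix}) and using the dimension-drop presentation of $\mathcal{Z}$, one amalgamates them to a $*$-isomorphism $\Phi\colon A\otimes\mathcal{Z}\to B\otimes\mathcal{Z}$ of $KK$-class $\kappa$, which therefore induces $\phi$.

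The step I expect to be the genuine obstacle is the invocation of hypothesis (ii): establishing tracial rank zero of $A_{\{y\}}\otimes M_{q^{\infty}}$ really relies on the dynamical, recursive-subhomogeneous structure of $A_{\{y\}}$, since separation of traces by projections does not by itself force real rank zero for an arbitrary simple ASH algebra. By contrast, once tracial rank zero is in hand, the bootstrap/UCT bookkeeping and the passage from UHF-stable to $\mathcal{Z}$-stable classification are routine applications of the quoted results.
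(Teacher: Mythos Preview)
Your overall architecture matches the paper's: verify that $A_{\{y\}}\otimes M_{q^{\infty}}$ has tracial rank zero, apply Theorem~\ref{main theorem} with $\mathcal{S}$ the finite dimensional algebras, and then invoke a Lin--Niu type result to pass to $\mathcal{Z}$-stability. The differences are in the one step you yourself flag and in the endgame.

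The genuine gap is your justification of tracial rank zero for $A_{\{y\}}\otimes M_{q^{\infty}}$. You appeal to \cite{LinPhi:MinHom} ``on the structure of these subalgebras'' to conclude that $A_{\{y\}}$ itself has tracial rank zero; but the relevant results in \cite{LinPhi:MinHom} take real rank zero (and stable rank one) of $A_{\{y\}}$ as \emph{hypotheses}, not conclusions, and without a dimension bound on $X$ there is no independent route to real rank zero of $A_{\{y\}}$ from separation of traces alone. The paper avoids this by working only with the UHF-stabilization: $A_{\{y\}}\otimes M_{q^{\infty}}$ is approximately divisible (it absorbs a UHF algebra), and once you have transferred separation of traces to it via Lemma~\ref{K(A_y x Mq) = K(A x Mq)} and \cite[Theorem~1.2]{LinQPhil:KthoeryMinHoms}, Blackadar--Kumjian--R{\o}rdam \cite{BlaKumRor:apprdiv} gives real rank zero. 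Separately, the recursive subhomogeneous inductive limit structure of $A_{\{y\}}$ from \cite{LinQPhil:KthoeryMinHoms} together with \cite{NgWinter:subhom} yields locally finite decomposition rank for $A_{\{y\}}\otimes M_{q^{\infty}}$; then \cite[Theorem~2.1]{Winter:lfdr} converts locally finite decomposition rank plus real rank zero into tracial rank zero. This is the argument you need in place of the bare citation of \cite{LinPhi:MinHom}, and it only yields TR0 for $A_{\{y\}}\otimes M_{q^{\infty}}$, not for $A_{\{y\}}$.

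On the endgame: your two-stage plan (classify at each prime via Lin, then amalgamate via \cite{Win:localizingEC}/\cite{Lin:localizingECappendix}) is correct in spirit but more than is needed. The paper simply quotes \cite[Theorem~5.4]{LinNiu:KKlifting}, whose hypothesis is precisely that $A\otimes M_{q^{\infty}}$ has tracial rank zero for every $q$, and whose conclusion is already the $\mathcal{Z}$-stable isomorphism inducing the given $K$-theory map; the UCT holds automatically for transformation group $C^{*}$-algebras of amenable groups. So you can drop the explicit $KK$-lift and the separate amalgamation step.
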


\begin{proof}
Let $A = C(X) \rtimes_{\alpha} \mathbb{Z}$ and $ B  = C(Y) \rtimes_{\beta} \mathbb{Z}$ be in $\mathcal{A}$ and suppose $\phi :  K_*(A \otimes \mathcal{Z}) \to K_*(B \otimes \mathcal{Z})$ is a graded order isomorphism. Let $q$ be any prime number, let $x \in X$ and $y \in Y$; define the $C^*$-subalgebras  $A_{\{x\}}:= C^{*}(C(X), u C_{0}(X \setminus \{x\}))$ and $B_{\{y\}}:= C^{*}(C(Y), v C_{0}(Y \setminus \{y\}))$, where $u$ and $v$ are the unitaries implementing $\alpha$ and $\beta$ in $C(X) \rtimes_{\alpha} \mathbb{Z}$ and $C(Y) \rtimes_{\beta} \mathbb{Z}$, respectively. It follows from Section 3 of \cite{LinQPhil:KthoeryMinHoms} and Corollary 2.2 of \cite{NgWinter:subhom} that both $A_{\{x\}} \otimes M_{q^{\infty}}$ and $B_{\{y\}} \otimes M_{q^{\infty}}$ have locally finite decomposition rank. By Lemma~\ref{K(A_y x Mq) = K(A x Mq)} above and Theorem 1.2 (4) of \cite{LinQPhil:KthoeryMinHoms}, the $K_0$-groups and tracial state spaces of $A_{\{x\}} \otimes M_{q^{\infty}}$ and $A \otimes M_{q^{\infty}}$ (respectively $B_{\{y\}} \otimes M_{q^{\infty}}$ and $B \otimes M_{q^{\infty}}$) are identical. But then our assumptions on the class $\mathcal{A}$ imply that $A_{\{x\}} \otimes M_{q^{\infty}}$ and $B_{\{y\}} \otimes M_{q^{\infty}}$ have projections separating tracial states. Since $A_{\{x\}} \otimes M_{q^{\infty}}$ and $B_{\{y\}} \otimes M_{q^{\infty}}$ are approximately divisible (cf.\ \cite{Ror:encyc}), we deduce they have real rank zero by the results of \cite{BlaKumRor:apprdiv}, whence tracial rank zero by Theorem 2.1 of \cite{Winter:lfdr}.  Applying Theorem \ref{main theorem} (with $\mathcal{S}$ being the class of finite dimensional $C^{*}$-algebras), $A \otimes M_{q^{\infty}}$ and $B \otimes M_{q^{\infty}}$ have tracial rank zero. Since this is true for any $q$, we may employ Theorem 5.4 of \cite{LinNiu:KKlifting} to conclude that there is a $*$-isomorphism $\Phi : A \otimes \mathcal{Z} \to B \otimes \mathcal{Z}$ inducing $\phi$.  Note that \cite{LinNiu:KKlifting} also requires $A$ and $B$ to satisfy the UCT, which  automatically holds in our situation.
\end{proof}

Most notably, this is the missing link between existing classification results and the work of Toms and the second named author in \cite{TomsWinter:minhom}. There it is shown that if $X$ has finite covering dimension, then the resulting crossed product is $\mathcal{Z}$-stable (\cite{TomsWinter:minhom}, Theorem~4.4); in this case Corollary \ref{classification} becomes Theorem~0.1 of \cite{TomsWinter:minhom}; see also Theorem~A of \cite{TomsWinter:PNAS}. In particular this solves the classification problem for $C^*$-algebras associated to uniquely ergodic minimal finite dimensional dynamical systems.

A compelling set of examples are the $C^*$-algebras arising from minimal homeomorphisms of odd spheres $S^n$ for $n \geq 3$ odd. These were first considered in Section 5 of \cite{Con:Thom} for $n = 3$  and $\alpha$ a minimal diffeomorphism. It follows from Corollary 3 of Section 5 of \cite{Con:Thom} that the crossed product of $C(S^n)$ by $\mathbb{Z}$ induced by a minimal diffeomorphism has no nontrivial projections. The $K_0$-group of such a $C^*$-algebra $A$ is shown in Example 4.6 of \cite{Phi:CancelSRDirLims} to be $K_0(A) = \mathbb{Z}^2$ with order given by $(m, n) \geq 0$ if and only if $n > 0$ or $(m,n) = (0,0)$. These examples are covered by Corollary~\ref{classification} precisely in the uniquely ergodic case (since otherwise \ref{classification} (ii) is not satisfied), so they are classified up to $\mathcal{Z}$-stability. The main result of \cite{TomsWinter:minhom} (which in turn makes heavy use of \cite{Winter:dr-Z-stable}) then shows that $\mathcal{Z}$-stability is automatic, so in the uniquely ergodic case Connes' odd spheres are all isomorphic. The respective statement in the smooth case was already derived in \cite{Winter:dr-Z-stable}, using the inductive limit decomposition of \cite{LinPhi:mindifflimits}. 

To use Corollary~\ref{classification} to classify crossed products with more general tracial state spaces, at the current stage only Lin's classification of $C^{*}$-algebras which are TAI after tensoring with UHF algebras is available \cite{Lin:asu-class}. The algebras covered by this are all rationally Riesz, i.e., their ordered $K_{0}$-groups become Riesz groups after tensoring with the $K$-theory of a UHF algebra. However, for general transformation group $C^{*}$-algebras it is not clear when they are rationally Riesz, and one cannot hope for TAI classification to be a sufficient tool in this case. The strategy would then be to  identify  a suitable class $\mathcal{S}$ of unital $C^{*}$-algebras such that $A_{\{y\}} \otimes M_{q^{\infty}}$ can be verified to be TA$\mathcal{S}$ and such that  TA$\mathcal{S}$ algebras can be classified in a similar manner as TAF or TAI algebras. At least for odd spheres it is not hard to see that they are rationally Riesz, so it only remains to verify that they are indeed TAI up to tensoring with UHF algebras; the result would then be that they are entirely determined by their tracial state spaces,  by virtue of \cite{LinNiu:KKlifting} and our Corollary~\ref{classification}. At least in the case of finitely many ergodic measures we are confident that our results will lead to a complete solution; this will be pursued in a subsequent paper.

We wish to point out that Corollary~\ref{classification} does not require any condition on the dimension of the underlying space and that, without such a condition, classification up to $\mathcal{Z}$-stability is probably the best for which one can hope. The examples constructed by Giol and Kerr in \cite{GioKerr:Subshifts} suggest that counterexamples to the general case of the Elliott conjecture as exhibited by Toms in \cite{Toms:example} can also occur as transformation group $C^{*}$-algebras. One would still expect classification up to $\mathcal{Z}$-stability in this setting, which would then also imply that at least  the crossed products stabilized by  $\mathcal{Z}$ have finite topological dimension.  Conversely, if the underlying space is finite dimensional, then $\mathcal{Z}$-stability is automatic by \cite{TomsWinter:minhom}. In this sense, our result is analogous to \cite{Winter:lfdr}, which in the real rank zero case also provided classification up to $\mathcal{Z}$-stability under otherwise mild structural conditions. 

\section*{Acknowledgments}
We would like to thank the referee for a number of helpful comments. \\
\thanks{{\it Supported by:} EPSRC First Grant EP/G014019/1}.

%
\providecommand{\bysame}{\leavevmode\hbox to3em{\hrulefill}\thinspace}
\providecommand{\MR}{\relax\ifhmode\unskip\space\fi MR }
\providecommand{\MRhref}[2]{%
  \href{http://www.ams.org/mathscinet-getitem?mr=#1}{#2}
}
\providecommand{\href}[2]{#2}
 
\end{document}